\tikzset{->-/.style={decoration={
  markings,
  mark=at position .5 with {\arrow{>}}},postaction={decorate}}}
\DeclarePairedDelimiterX{\norm}[1]{\lVert}{\rVert}{#1}
\theoremstyle{plain}
\newtheorem{theorem}{Theorem}[section]
\newtheorem{lemma}[theorem]{Lemma}
\newtheorem{cor}[theorem]{Corollary}
\newtheorem{prop}[theorem]{Proposition}
\theoremstyle{definition}
\begin{document}
\title{Isoperimetric inequality for Disconnected Regions}

\author{Bidyut Sanki}
\address{Department of Mathematics and Statistics, Indian Institute of Technology\\ Kanpur\\ Uttar Pradesh - 208016\\ India}
\email{bidyut@iitk.ac.in}
\author{Arya Vadnere}
\address{Department of Mathematics, University of Buffalo \\ New York - 14260-2900 \\ United States of America}
\email{aryaabhi@buffalo.edu}

\maketitle

\begin{abstract}
The discrete isoperimetric inequality in Euclidean geometry states that among all $n$-gons having a fixed perimeter $p$, the one with the largest area is the regular $n$-gon. The statement is true in spherical geometry (see~\cite{Las}) and hyperbolic geometry (see~\cite{Bez}) as well. 

In this paper, we generalize the discrete isoperimetric inequality to disconnected regions, i.e. we allow the area to be split between regions. We give necessary and sufficient conditions for the result (in Euclidean, spherical and hyperbolic geometry) to hold for multiple $n$-gons whose areas add up. 
\end{abstract}


\section{Introduction} \label{sec:intro}

The discrete isoperimetric inequality is a profound result in Euclidean geometry, which states that among all $n-$gons with fixed area $a>0$, the one with the least perimeter is the regular $n-$gon. The proof is a standard exercise in Euclidean geometry. The similar statement also hold in spherical geometry (as shown in \cite{Las} by L\'{a}szl\'{o} Fejes T\'{o}th) and in hyperbolic geometry (as shown in \cite{Bez} by K\'{a}roly Bezdek). 

In~\cite{Csi}, Csik\'{o}s, L\'{a}ngi and Nasz\'{o}di have extended this result from geodesic polygons to polygons bounded by arcs of constant geodesic curvature $k_{g}$, in Euclidean, spherical and hyperbolic geometry. A similar optimization problem, regarding maximizing the area of polygons with bounded diameter, has been dealt with by Bieberbach (see \cite{Bie}) in the Euclidean geometry, and by B\"{o}r\"{o}czky and Sagmeister (see \cite{Bor}) in the spherical and hyperbolic geometry.  

In this paper, we consider the generalization of the discrete isoperimetric inequality to multiple polygons with fixed total area. More precisely, let $\mathbb{M}$ denote either the Euclidean plane $\mathbb{R}^2$, the Riemann sphere $\mathbb{S}^2$, or the hyperbolic plane $\mathbb{H}^2$ with the respective geometry of constant sectional curvature $K=0,1$ or $-1$. Define a \textit{configuration} of polygons in $\mathbb{M}$ as a finite set of disjoint polygons, $P_1, \dots, P_k,$ in $\mathbb{M}$. We define the \textit{total area} (similarly, the \textit{total perimeter}) of a configuration of polygons as the sum of their areas, i.e., $\sum\limits_{i=1}^k \mathrm{area}(P_i)$ (similarly, the perimeters, i.e., $\sum\limits_{i=1}^k \mathrm{perim}(P_i)$). For simplicity, we only work with non-degenerate polygons (i.e. polygons with non-zero area). 

Suppose $P$ is a regular $n$-sided polygon and $\{P_1, \dots, P_k\}$ is an arbitrary configuration of $n$-sided polygons in $\mathbb{M}$ with the same total area as $P$. The central question we study here is finding under what circumstances the inequality below $$\mathrm{perim}(P)\leq \sum\limits_{i=1}^k \mathrm{perim}(P_i)$$ holds true in $\mathbb{M}.$ 

Note that, by the isoperimetric inequality, any configuration achieving minimal total perimeter would only have regular $n$-gons. Thus, we shall restrict our attention to configurations with only regular $n$-gons throughout this paper, and note that the results would carry over for general configurations.

We prove the following result to answer the question above, in the cases when $\mathbb{M}=\mathbb{R}^2$ and $\mathbb{S}^2$.

\begin{prop}\label{prop:weak}
Let $P_1, \dots, P_k$ and $P$ be regular $n$-gons (for $n\geq3$) in $\mathbb{M}$, where $\mathbb{M}$ is either $\mathbb{R}^2$ or $\mathbb{S}^2$, with areas $a_1, \dots, a_k$ and $a$ respectively, satisfying $\sum\limits_{i=1}^k a_i = a$. Then 
\begin{equation}\label{ineq:main}
\mathrm{perim}(P) \leq \sum\limits_{i=1}^k \mathrm{perim}\left(P_i\right). 
\end{equation}
\end{prop}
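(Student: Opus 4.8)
The plan is to reduce the statement to the subadditivity of a single function of one variable. For a fixed $n$, the perimeter of a regular $n$-gon in $\mathbb{M}$ is determined by its area, so let $f\colon[0,M)\to[0,\infty)$ be the function sending an admissible area $x$ to the perimeter of the regular $n$-gon of area $x$ (here $M$ is the supremum of admissible areas, with $M=\infty$ in the Euclidean case). In this language the claim \eqref{ineq:main} is exactly $f\!\left(\sum_i a_i\right)\le \sum_i f(a_i)$, i.e.\ the subadditivity of $f$. I would isolate the following elementary lemma: if $f$ is concave on $[0,M)$ and $f(0)\ge 0$, then $f$ is subadditive. Its proof is short: concavity and $f(0)\ge0$ give $f(tx)\ge t\,f(x)$ for $t\in[0,1]$; writing $x_j=\tfrac{x_j}{s}\,s$ with $s=\sum_i x_i$ and summing yields $\sum_j f(x_j)\ge \tfrac{\sum_j x_j}{s}\,f(s)=f(s)$. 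Thus everything comes down to proving that $f$ is concave with $f(0)=0$.

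The Euclidean case is immediate. A regular $n$-gon of side $\ell$ has perimeter $n\ell$ and area $\tfrac14 n\ell^2\cot(\pi/n)$, so eliminating $\ell$ gives $f(x)=2\sqrt{n\tan(\pi/n)}\,\sqrt{x}$. Since $\sqrt{\,\cdot\,}$ is concave with value $0$ at $0$, so is $f$, and the lemma finishes this case.

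For the spherical case I would use spherical trigonometry on the right triangle cut out by the center $O$, a vertex $V$, and the midpoint of an adjacent side. Its angle at $O$ is $\pi/n$, its angle at $V$ is $\theta/2$ (half the interior angle $\theta$), and the right angle sits at the midpoint; the right-triangle relation $\cos(\pi/n)=\cos(\ell/2)\sin(\theta/2)$ then recovers the side length $\ell$, hence the perimeter. By the spherical Gauss--Bonnet formula the area is $a=n\theta-(n-2)\pi$, so $\theta/2=\tfrac{a}{2n}+\tfrac{(n-2)\pi}{2n}$ is affine in $a$, and
\[
f(a)=2n\arccos\!\left(\frac{\cos(\pi/n)}{\sin(\theta/2)}\right).
\]
At $a=0$ one has $\theta/2=\tfrac{\pi}{2}-\tfrac{\pi}{n}$, so $\sin(\theta/2)=\cos(\pi/n)$ and $f(0)=0$. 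Because $\theta/2$ is affine in $a$, concavity of $f$ in $a$ is equivalent to concavity of $\phi\mapsto\arccos\bigl(\cos(\pi/n)/\sin\phi\bigr)$ in the angle $\phi=\theta/2$. A direct differentiation, which I would carry out, gives a second derivative whose sign equals that of $\cos^2\phi-\sin(\pi/n)$. Finally I would observe that a convex regular spherical $n$-gon has $\theta<\pi$, so $\phi=\theta/2$ ranges over $[\tfrac{\pi}{2}-\tfrac{\pi}{n},\tfrac{\pi}{2})$; on this interval $\cos^2\phi\le\cos^2(\tfrac{\pi}{2}-\tfrac{\pi}{n})=\sin^2(\pi/n)\le\sin(\pi/n)$, so the second derivative is $\le 0$ and $f$ is concave, completing the proof via the lemma.

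I expect the spherical concavity to be the only real obstacle. The algebra of the second derivative is routine but must be organized so that its sign reduces cleanly to $\cos^2\phi-\sin(\pi/n)$; the genuinely delicate point is pinning down the exact range of $\phi=\theta/2$ and checking that the favorable inequality holds on its boundary (area $\to 0$), where $\cos^2\phi=\sin^2(\pi/n)$ makes the estimate tight up to the harmless factor $\sin(\pi/n)\le1$.
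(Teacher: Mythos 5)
Your reduction is sound and it takes a genuinely different route from the paper's. The paper proves the $k=2$ case and then recurses: in the Euclidean case it notes $p_1^2+p_2^2=p^2$ and invokes the triangle inequality for a degenerate right triangle, and in the spherical case it parametrizes by the interior angle (Gauss--Bonnet turns additivity of areas into $\theta_1+\theta_2=\theta+\tfrac{(n-2)\pi}{n}$) and applies a chord-comparison lemma for the strictly concave function $x\mapsto\arccos\bigl(\cos(\pi/n)/\sin(x/2)\bigr)$. You instead parametrize by area and use the standard fact that a concave function with $f(0)\ge 0$ is subadditive; this handles all $k$ at once, unifies the two geometries under one lemma, and since the interior angle is affine in the area it reduces to essentially the same concavity computation the paper performs. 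One concrete correction, though: your claimed simplification of the spherical second derivative is wrong. With $\phi=\theta/2$, $k=\cos(\pi/n)$ and $u(\phi)=\arccos(k\csc\phi)$, one finds $u'(\phi)=\dfrac{k\csc\phi\cot\phi}{\sqrt{1-k^{2}\csc^{2}\phi}}>0$ and
\[
u''(\phi)=-\frac{k\csc\phi\left(\cot^{2}\phi+\csc^{2}\phi\right)}{\sqrt{1-k^{2}\csc^{2}\phi}}-\frac{k^{3}\csc^{3}\phi\cot^{2}\phi}{\left(1-k^{2}\csc^{2}\phi\right)^{3/2}},
\]
a sum of two manifestly negative terms on $\left(\tfrac{\pi}{2}-\tfrac{\pi}{n},\tfrac{\pi}{2}\right)$ (where $\sin\phi>k$, so $1-k^{2}\csc^{2}\phi>0$). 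The sign is not governed by $\cos^{2}\phi-\sin(\pi/n)$, and no delicate boundary analysis near area $0$ is needed: concavity is unconditional on the whole range, exactly as in the paper's computation of $f''$. So your conclusion stands once the sign criterion is replaced by the direct computation; if you also want the paper's addendum that equality forces $k=1$, observe that the concavity is strict, so your subadditivity inequality is strict whenever at least two of the $a_i$ are positive.
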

Thus, among all configurations with fixed total area, the configuration with a single regular $n$-gon is the unique configuration achieving minimal total perimeter, when $\mathbb{M}$ is either $\mathbb{R}^2$ or $\mathbb{S}^2$.

Proposition~\ref{prop:weak} is not true in general when $\mathbb{M}$ is the hyperbolic plane. We can have hyperbolic polygons with bounded area but arbitrarily large perimeter. For a counter-example, let $T_{\epsilon}$ denote the regular hyperbolic triangle with interior angle $\epsilon$, where $0< \epsilon< \frac{\pi}{3}$. By Gauss-Bonnet theorem (see Chapter $7$, Theorem $6.4$ in ~\cite{ONeill}), $$\mathrm{area}\left(T_{\epsilon}\right)=\pi-3\epsilon.$$ Now, consider the configuration $\left\{ T_{1},T_{2}\right\} $ consisting of two regular hyperbolic triangles with areas $\frac{\pi}{2},\frac{\pi}{2}-3\epsilon$ respectively, so that $$\mathrm{area}(T_\epsilon) = \mathrm{area}(T_1)+\mathrm{area}(T_2).$$ Now, using hyperbolic trigonometric formula (see Theorem 2.2.1, (ii) in~\cite{Bus}), we have 
\begin{align*}
\mathrm{perim(T_1)} &= 3 \cosh^{-1}(3+2\sqrt{3}) \text{ and }\\ \mathrm{perim(T_2)} &=3 \cosh^{-1}\left( \frac{\cos^2\left(\frac{\pi}{6}+\epsilon \right) + \cos\left(\frac{\pi}{6}+\epsilon \right) }{\sin^2\left(\frac{\pi}{6}+\epsilon \right)}\right).
\end{align*}
Note that $\mathrm{perim(T_2)}$ is bounded above by $3 \cosh^{-1}(3+2\sqrt{3})$. Therefore, we have $$\mathrm{perim}(T_1)+ \mathrm{perim(T_2)}\leq 6 \cosh^{-1}(3+2\sqrt{3}).$$ On the other hand $\mathrm{perim}(T_{\epsilon})$ is a continuous function in $\epsilon$ and  
\[
\lim_{\epsilon\to0}\mathrm{perim}\left(T_{\epsilon}\right)=+\infty.
\]
Thus, there exists a value $\epsilon$ small enough, so that 
\[
\mathrm{perim}\left(T_{1}\right)+\mathrm{perim}\left(T_{2}\right)<\mathrm{perim}\left(T_\epsilon\right).
\]

We prove the theorem below:

\begin{theorem} \label{thm:Hyp}
Let $P$ be a regular hyperbolic $n$-gon (for $n\geq3$) with interior angle $\theta$. There exists a constant $\Theta=\Theta(n)$, depending only on $n$, such that 
\begin{enumerate}
\item If $\theta\geq\Theta$, then for any configuration of $n$-gons $P_{1},\dots,P_{k},$ with total area equal to $\mathrm{area}(P)$, we have $$ \mathrm{perim}\left(P\right)\leq \sum_{i=1}^{k}\mathrm{perim}\left(P_{i}\right).$$
Furthermore, if $\theta>\Theta$, then equality occurs only if $k=1$.
\item If $\theta<\Theta$, then there exists a configuration $Q_{1},\dots,Q_{l},$ (for some $l\geq1$) of $n$-gons, with total area equal to $\mathrm{area}(P)$, satisfying $$ \sum_{i=1}^{l}\mathrm{perim}\left(Q_{i}\right)<\mathrm{perim}\left(P\right).$$
\end{enumerate}
\end{theorem}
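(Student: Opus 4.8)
The plan is to reduce the whole statement to a one-variable analysis of the function recording the perimeter of a regular hyperbolic $n$-gon in terms of its area. Decomposing a regular $n$-gon with interior angle $\theta$ into $2n$ congruent right triangles with angles $\pi/n$, $\theta/2$, $\pi/2$, the right-angled hyperbolic relation $\cos(\pi/n)=\cosh(\ell/2)\sin(\theta/2)$ (the same formula used in the counterexample above) gives the side length $\ell=2\cosh^{-1}\!\big(\cos(\pi/n)/\sin(\theta/2)\big)$, while Gauss--Bonnet gives the area $(n-2)\pi-n\theta$. Writing $A=(n-2)\pi$ and eliminating $\theta=(A-a)/n$, I obtain the perimeter as an explicit smooth function $P=P_n$ of the area $a$ on $(0,A)$,
\[
P(a)=2n\,\cosh^{-1}\!\left(\frac{\cos(\pi/n)}{\sin\!\big(\tfrac{A-a}{2n}\big)}\right).
\]
With this notation the theorem becomes a purely analytic dichotomy: find the largest subinterval $(0,a^{\ast\ast}]$ of areas on which $P$ is \emph{subadditive}, and set $\Theta=(A-a^{\ast\ast})/n$. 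Indeed $\mathrm{perim}(P)\le\sum_i\mathrm{perim}(P_i)$ for every configuration of total area $a$ is precisely the inequality $P(a)\le\sum_i P(a_i)$ for every decomposition $a=\sum_i a_i$ with each $a_i\in(0,A)$.

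Next I would record the qualitative shape of $P$. One checks $P(0^+)=0$ together with the small-area expansion $P(a)=2\sqrt{n\tan(\pi/n)}\,\sqrt a+o(\sqrt a)$, which is exactly the Euclidean regular $n$-gon law and shows that $P$ is concave with $P'(0^+)=+\infty$ near $0$; at the other end the expansion $P(a)=-2n\log(A-a)+O(1)$ as $a\to A^-$ shows $P(A^-)=+\infty$ and that $P$ is convex near $A$. Thus $P$ is concave, then convex, with (at least one, and I expect exactly one) inflection point $a^\ast\in(0,A)$. The easy half of the argument is then immediate from a standard fact: a function $f$ with $f(0)=0$ that is concave on $[0,b]$ is subadditive there, so $f(x+y)\le f(x)+f(y)$ whenever $x+y\le b$, and grouping a $k$-term decomposition two summands at a time upgrades this to $f\big(\sum a_i\big)\le\sum f(a_i)$. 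Consequently, for every $a$ on which $P$ is subadditive the single polygon is optimal, establishing part (1); the strictness ($k=1$ forced) for $\theta>\Theta$ follows because the subadditivity inequalities coming from strict concavity are strict away from the trivial decomposition.

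For part (2) I must produce, for every $a>a^{\ast\ast}$, a genuinely cheaper configuration. The cleanest source is the tangent line to the graph of $P$ through the origin: since $P(a)/a\to+\infty$ at both ends of $(0,A)$, the ratio $P(a)/a$ attains an interior minimum $c^\ast=P(a_0)/a_0=P'(a_0)$, and every configuration obeys $\sum_i P(a_i)\ge c^\ast\sum_i a_i=c^\ast a$, with tilings by copies of the area-$a_0$ polygon essentially realizing the rate $c^\ast$. Comparing the bounded quantity $c^\ast a$ with $P(a)=-2n\log(A-a)+O(1)\to\infty$ already yields cheaper configurations for all $a$ near $A$. The main obstacle is to show the transition is sharp, i.e. that the set of areas where splitting strictly helps is exactly the interval $(a^{\ast\ast},A)$, equivalently that $P$ fails to be subadditive on $(0,a]$ for \emph{every} $a>a^{\ast\ast}$. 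I plan to handle this by analyzing the optimal decomposition directly: a marginal-cost (Lagrange) computation shows an optimal configuration uses pieces of at most two sizes, one on each monotone branch of the unimodal derivative $P'$ (whose minimum sits at $a^\ast$), so the problem collapses to a finite-dimensional comparison governed by $a\mapsto P(a)-2P(a/2)$ together with the origin-tangent data above. Monotonicity of ``splitting helps'' should then follow from a perturbation argument: enlarging one piece of a strictly cheaper configuration by $\eta$ keeps the total cheaper for area $a+\eta$, since the first-order change $\eta\,(P'(a_i)-P'(a))$ is dominated by the fixed positive deficiency $P(a)-\sum_i P(a_i)$. Propagating this perturbation all the way to $A$ without the deficiency returning to zero, and thereby identifying $a^{\ast\ast}$ with the first failure of subadditivity, is the crux where the explicit convexity profile of $P$ must be used.
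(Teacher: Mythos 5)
Your reduction to a one-variable problem for the area-to-perimeter function $P=P_n$ is sound, and your part (1) (concavity near $0$ implies subadditivity, then grouping two summands at a time) is essentially the right easy half. But the proposal has a genuine gap exactly where you flag ``the crux'': you never prove that the set of areas for which the single polygon is optimal is downward closed, i.e.\ that the transition happens at a single threshold $a^{\ast\ast}$. Defining $a^{\ast\ast}$ as the endpoint of the largest interval of subadditivity makes part (1) true by fiat, but then part (2) requires that for \emph{every} $a>a^{\ast\ast}$ some configuration strictly beats $P(a)$ --- and your definition only guarantees a cheaper configuration for \emph{some} $a'\le a$. The perturbation argument you sketch (enlarge one piece by $\eta$; the first-order change $\eta\,(P'(a_i)-P'(a))$ is dominated by the deficiency) only works for small $\eta$: the deficiency $P(a+\eta)-\sum_i P(a_i+\eta\delta_{i1})$ has derivative $P'(a+\eta)-P'(a_1+\eta)$, which can be negative, so nothing prevents the deficiency from returning to zero before you reach $A$. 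Since this monotonicity of ``splitting helps'' is precisely the content of the theorem beyond the concave range, the proof is incomplete. A secondary gap: the concave-then-convex profile of $P$ with a \emph{unique} inflection point is asserted from the asymptotic expansions at $0$ and $A$, but asymptotics of $P$ do not control the sign of $P''$ on the whole interval; likewise, the claimed strictness for $\theta>\Theta$ is only justified by strict concavity, which is unavailable on the portion $(a^\ast,a^{\ast\ast}]$ where $P$ is subadditive but no longer concave.

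For comparison, the paper closes exactly these holes by working with $g_n(\theta)=\cosh^{-1}\bigl(\cos(\pi/n)/\sin(\theta/2)\bigr)$ and proving that $g_n'$ is \emph{strictly concave} (a direct but delicate computation on $g_n'''$). This yields two consequences you would need: (i) the unique inflection point (your unimodality of $P'$), and (ii) that for a fixed total area the two-piece total perimeter has its only interior local minimum at the \emph{equal} split, so the single polygon beats all two-piece configurations if and only if it beats the symmetric one. The threshold $\Theta$ is then the unique sign change of the explicit one-variable function $\phi_n(x)=2g_n\bigl(\tfrac{x}{2}+\tfrac{\pi}{2}-\tfrac{\pi}{n}\bigr)-g_n(x)$, whose uniqueness is itself proved using the concavity of $g_n'$; this simultaneously gives part (2) (take two congruent halves) and, via a telescoping of cumulative areas, part (1) for arbitrary $k$. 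If you want to salvage your route, you should replace the perturbation step by an analogue of this: reduce the optimal configuration to the symmetric two-piece split and show the corresponding comparison function changes sign exactly once.
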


In short, the isoperimetric inequality for multiple disconnected hyperbolic polygons would hold only for sets of configurations whose total area is bounded above by $(n-2)\pi-n\Theta$, where $\Theta$ is the fixed real number depending on $n$, as given in Theorem~\ref{thm:Hyp}.

We shall prove Proposition \ref{prop:weak} in section \ref{sec:Euc_Sph}, and deal with the case when $\mathbb{M}=\mathbb{H}^2$ in section \ref{sec:Hyp_Geo}. Finally, we propose further directions of enquiry in section \ref{sec:Further}.


\section{Euclidean and Spherical Geometry}\label{sec:Euc_Sph}

\subsection{Euclidean Geometry}
We shall first prove Proposition \ref{prop:weak} for $\mathbb{M}=\mathbb{R}^2$ and configurations with at most two polygons. Then we deduce the general result as a corollary.

\begin{theorem} \label{thm:Euc}
Suppose $P_1,P_{2}$ and $P$ are regular Euclidean $n$-gons (for $n\geq3$), with areas $a_1,a_{2}$ and $a$ respectively, satisfying $a_{1}+a_{2}=a$. Then $$ \mathrm{perim}\left(P\right) \leq
\mathrm{perim}\left(P_{1}\right)+\mathrm{perim}\left(P_{2}\right),$$
with equality if and only if at least one of $P_{1}$ and $P_{2}$ is degenerate $($i.e. one of $a_{1}$ or $a_{2}$ is $0)$. 
\end{theorem}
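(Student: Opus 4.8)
The plan is to exploit the fact that all regular $n$-gons are similar, so that for a regular $n$-gon the perimeter is determined by the area up to a single constant depending only on $n$. Concretely, I would first record the elementary relation between the side length $s$, the perimeter $p=ns$, and the area $A=\frac{n s^2}{4}\cot\!\left(\frac{\pi}{n}\right)$ of a regular $n$-gon, obtained by decomposing it into $n$ congruent isosceles triangles meeting at the center (each with base $s$ and apothem $\frac{s}{2}\cot\!\left(\frac{\pi}{n}\right)$). Eliminating $s$ gives $p^2 = 4n\tan\!\left(\frac{\pi}{n}\right)A$, so that
\begin{equation*}
\mathrm{perim}(Q) = c_n\,\sqrt{\mathrm{area}(Q)}, \qquad c_n := 2\sqrt{n\tan\!\left(\tfrac{\pi}{n}\right)},
\end{equation*}
for every regular Euclidean $n$-gon $Q$, where $c_n>0$ depends only on $n$. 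This identity is the whole point: it turns a geometric statement into a one-variable inequality.

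With this in hand the theorem reduces to a purely algebraic claim. Substituting the formula above for each of $P_1$, $P_2$, and $P$, and cancelling the common positive factor $c_n$, the desired inequality $\mathrm{perim}(P)\le\mathrm{perim}(P_1)+\mathrm{perim}(P_2)$ becomes exactly
\begin{equation*}
\sqrt{a_1+a_2}\ \le\ \sqrt{a_1}+\sqrt{a_2},
\end{equation*}
using $a=a_1+a_2$. I would prove this by squaring both sides (both are nonnegative): the right-hand side squares to $a_1+a_2+2\sqrt{a_1 a_2}$, which exceeds or equals $a_1+a_2$ precisely because the cross term $2\sqrt{a_1 a_2}$ is nonnegative. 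This is just the subadditivity of the square-root function, equivalently the concavity of $\sqrt{\cdot}$ on $[0,\infty)$.

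For the equality case, I would trace back through the reduction: equality holds if and only if $2\sqrt{a_1 a_2}=0$, i.e. $a_1 a_2 = 0$, which means one of $a_1,a_2$ vanishes, so one of $P_1,P_2$ is degenerate. I do not expect any serious obstacle here, since the argument is elementary once the scaling relation is isolated; the only thing requiring care is the clean derivation of $c_n$ and the justification that squaring preserves the inequality (both sides nonnegative). It is worth emphasizing that the decisive feature is that $\mathrm{perim}$ is a \emph{concave} increasing function of $\mathrm{area}$ for regular $n$-gons in $\mathbb{R}^2$; this concavity is exactly what forces subadditivity, and its failure is what will later allow the hyperbolic counterexamples.
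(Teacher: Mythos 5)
Your proof is correct and rests on exactly the same identity as the paper's, namely $p^2 = 4n\tan(\pi/n)\,a$ for a regular $n$-gon; the paper phrases the resulting subadditivity via the Pythagorean theorem and the triangle inequality ($p_1^2+p_2^2=p^2$ forces $p\le p_1+p_2$), while you square $\sqrt{a_1}+\sqrt{a_2}$ directly, which is a purely cosmetic difference. The equality analysis also matches.
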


\begin{proof}
For a regular $n$-gon with perimeter $p$ and area $a$, we have the relation $$a=\frac{p^{2}}{4n\tan\left(\pi/n\right)}=c p^2,$$ where $c=\frac{1}{4n\tan(\pi/n)}$ is a constant depending only on $n$. Thus, using the equality $a_1+a_2=a$, we have
\begin{align*}
\left(\mathrm{perim}\left(P_{1}\right)\right)^{2}+\left(\mathrm{perim}\left(P_{2}\right)\right)^{2} & =\left(\mathrm{perim}\left(P\right)\right)^{2}.
\end{align*}
By Pythagorus' theorem, we get that $\mathrm{perim}\left(P_{1}\right)$, $\mathrm{perim}\left(P_{2}\right)$ and $\mathrm{perim}\left(P\right)$ form the sides of a right angled triangle with $\mathrm{perim}(P)$ being the hypotenuse. The triangle inequality then implies that
\[
\mathrm{perim}\left(P\right)\leq \mathrm{perim}\left(P_{1}\right)+\mathrm{perim}\left(P_{2}\right),
\]
and the inequality is strict unless the right angled triangle is degenerate. This means that either $\mathrm{perim}\left(P_{1}\right)$ or $\mathrm{perim}\left(P_{2}\right)$ is $0$.  
\end{proof}

Now, in the Euclidean case, Proposition \ref{prop:weak} follows as an immediate corollary. 
\begin{cor}
Proposition \ref{prop:weak} is true when $\mathbb{M}=\mathbb{R}^{2}$. In particular, equality occurs in the inequality~\eqref{ineq:main} only when $k=1$. 
\end{cor}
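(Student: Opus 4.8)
The plan is to deduce the Corollary from Theorem~\ref{thm:Euc} by induction on the number $k$ of polygons, exploiting the fact that the two-polygon case is already in hand. The base case $k=1$ is trivial, since then $P_1$ and $P$ are regular $n$-gons of the same area, hence congruent, giving equality. The case $k=2$ is precisely Theorem~\ref{thm:Euc}. So the substance lies in the inductive step.

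For the inductive step, suppose the result holds for every configuration of at most $k-1$ regular $n$-gons. Given $P_1,\dots,P_k$ with areas $a_1,\dots,a_k$ summing to $a$, I would first group the last two polygons: let $P'$ be the regular $n$-gon with area $a_{k-1}+a_k$. Applying Theorem~\ref{thm:Euc} to $P_{k-1}$, $P_k$ and $P'$ yields $\mathrm{perim}(P')\le \mathrm{perim}(P_{k-1})+\mathrm{perim}(P_k)$. This replaces the original configuration by the $(k-1)$-polygon configuration $P_1,\dots,P_{k-2},P'$, whose areas still sum to $a$, so the induction hypothesis gives $\mathrm{perim}(P)\le \sum_{i=1}^{k-2}\mathrm{perim}(P_i)+\mathrm{perim}(P')$. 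Chaining the two inequalities produces the desired bound $\mathrm{perim}(P)\le \sum_{i=1}^{k}\mathrm{perim}(P_i)$.

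A cleaner alternative, which I would mention, bypasses the induction entirely. Writing $p_i=\mathrm{perim}(P_i)$ and $p=\mathrm{perim}(P)$, the relation $a_i=c\,p_i^2$ from the proof of Theorem~\ref{thm:Euc} (with the same constant $c=c(n)$) together with $\sum a_i=a$ gives $\sum_{i=1}^k p_i^2=p^2$. Since $\bigl(\sum_i p_i\bigr)^2=\sum_i p_i^2+2\sum_{i<j}p_ip_j\ge \sum_i p_i^2=p^2$, taking square roots (all quantities positive) yields $\sum_i p_i\ge p$ directly.

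The only delicate point, and where I would concentrate the argument, is the equality analysis. Because the paper restricts to non-degenerate polygons, every $a_i>0$, hence every $p_i>0$. In the inductive route, equality in the final chain would force equality in the application of Theorem~\ref{thm:Euc} to $P_{k-1},P_k,P'$, which by that theorem requires one of $P_{k-1},P_k$ to be degenerate — impossible for $k\ge 2$. In the direct route, equality forces the cross terms $\sum_{i<j}p_ip_j$ to vanish, which with all $p_i>0$ is possible only when there are no cross terms, i.e.\ $k=1$. Either way, the inequality is strict whenever $k\ge 2$, so equality in~\eqref{ineq:main} occurs exactly when $k=1$, completing the proof.
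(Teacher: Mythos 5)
Your inductive argument is exactly the paper's proof, which consists of the single sentence ``apply Theorem~\ref{thm:Euc} to pairs of polygons and proceed recursively''; you have simply spelled out the recursion and the equality analysis that the paper leaves implicit. Your alternative direct route is a genuinely different (and arguably cleaner) argument that the paper does not give: from $a_i=c\,p_i^2$ and $\sum a_i=a$ you get $\sum_{i=1}^k p_i^2=p^2$ all at once, and then $\bigl(\sum_i p_i\bigr)^2=\sum_i p_i^2+2\sum_{i<j}p_ip_j\geq p^2$ yields the inequality for all $k$ simultaneously, with the equality case read off immediately from the vanishing of the cross terms (since every $p_i>0$ under the paper's non-degeneracy convention). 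What the direct route buys is that it avoids induction entirely and makes the ``equality only when $k=1$'' claim transparent; what the recursive route buys is that it is the template that generalizes to the spherical and hyperbolic cases, where no such quadratic identity is available. Both arguments are correct.
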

\begin{proof}
By applying Theorem~\ref{thm:Euc} to pairs of polygons and proceeding recursively, we have the corollary.
\end{proof}

\subsection{Spherical Geometry}

We shall now proceed similarly to prove Proposition \ref{prop:weak} for $\mathbb{M}=\mathbb{S}^2$ and configurations with at most two polygons. Then we deduce the general result as a corollary. Before proceed, we recall the following lemma from analysis which is used in the proof. 

\begin{lemma}\label{lem:ineq}
Let $f:[a,b]\to\mathbb{R}$ be a strictly concave function. Suppose $c,d\in\left(a,b\right)$
such that $a+b=c+d$. Then $$f(c)+f(d)> f(a)+f(b).$$
\end{lemma}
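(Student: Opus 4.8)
The plan is to prove this clean convexity-type inequality for strictly concave functions, which is a standard tool that will later be applied to the perimeter-versus-area relation for spherical polygons. The statement says that if we take two interior points $c,d$ whose sum equals the sum $a+b$ of the endpoints, then the concave function has strictly larger total value at the interior pair than at the endpoints; intuitively, a strictly concave function rewards ``spreading toward the middle.'' Without loss of generality I would assume $a < c \leq d < b$, so that the two points $c,d$ lie strictly inside the interval and the endpoint pair $\{a,b\}$ is ``more spread out'' than the interior pair $\{c,d\}$, while having the same sum (equivalently, the same midpoint).

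The cleanest approach is to exploit the fact that both pairs share the common midpoint $m = \tfrac{a+b}{2} = \tfrac{c+d}{2}$, and to write each point as a convex combination of the two endpoints $a$ and $b$. Concretely, I would set $c = (1-t)a + tb$ and $d = ta + (1-t)b$ for a suitable parameter $t \in (0, \tfrac12]$; the condition $a+b=c+d$ forces exactly this symmetric pairing of weights, and the requirement $c,d \in (a,b)$ guarantees $t \in (0,1)$. Then by strict concavity of $f$ I would bound each interior value from below:
\begin{align*}
f(c) &> (1-t)f(a) + t\,f(b), \\
f(d) &> t\,f(a) + (1-t)f(b).
\end{align*}
Adding these two inequalities, the weights combine so that the coefficient of $f(a)$ becomes $(1-t)+t = 1$ and likewise the coefficient of $f(b)$ becomes $1$, yielding immediately $f(c)+f(d) > f(a)+f(b)$, as desired.

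The only genuine point requiring care — and what I would flag as the main (minor) obstacle — is justifying that the concavity inequalities are \emph{strict} and verifying that the parameter $t$ genuinely lies in the open interval $(0,1)$ rather than at an endpoint. Strictness of the two bounds needs $t \in (0,1)$, i.e. that neither $c$ nor $d$ coincides with an endpoint $a$ or $b$; this is exactly guaranteed by the hypothesis $c,d \in (a,b)$. If $c=d=m$, both inequalities still hold strictly since $m$ is a proper interior convex combination of $a$ and $b$ with $a\neq b$. An equivalent and perhaps more transparent packaging is to note that the graph of the concave function over $[a,b]$ lies strictly above the chord joining $(a,f(a))$ and $(b,f(b))$ at every interior point, and that the sum $f(c)+f(d)$ equals twice the average of the function values at the symmetric pair $c,d$, which exceeds twice the chord height at the common midpoint $m$, namely $f(a)+f(b)$. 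Either formulation reduces the lemma to the defining inequality of strict concavity, so no delicate estimation is involved.
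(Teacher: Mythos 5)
Your proof is correct and is essentially the same as the paper's: writing $c=(1-t)a+tb$ and $d=ta+(1-t)b$ and invoking strict concavity is just a reparametrization of the paper's argument that $(c,f(c))$ and $(d,f(d))$ lie strictly above the chord through $(a,f(a))$ and $(b,f(b))$, with the cross terms cancelling because $c+d=a+b$. No gaps.
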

\begin{proof}
By concavity of the function $f$, the points $(c,f(c))$ and $(d,f(d))$ on the graph of $f$ lie above the chord joining points $(a,f(a))$ and $(b,f(b))$. Therefore, we have
\begin{align*} 
f(c) & >f(a)+\frac{f(b)-f(a)}{b-a}(c-a)\text{ and }\\ f(d) & > f(b)+\frac{f(a)-f(b)}{a-b}(d-b).
\end{align*}
Adding these two inequalities, we have the lemma. 
\end{proof}
Now, note that by Gauss-Bonnet theorem, the area of a regular spherical $n$-gon, with interior angle $\theta$, is $n\theta-\left(n-2\right)\pi$. Thus, the interior angle $\theta$ of a regular spherical $n$-gon must satisfy $\frac{(n-2)\pi}{n}< \theta< \pi$. With that in mind, we prove the following theorem.
\begin{theorem} \label{thm:Sph}
Let $P_{1},P_{2}$ and $P$ be regular spherical $n$-gons (for $n\geq3$) with areas $a_{1},a_{2}$ and $a$ respectively. Suppose that $a_{1}+a_{2}=a$. Then 
$$\mathrm{perim}\left(P\right)< \mathrm{perim}\left(P_{1}\right)+\mathrm{perim}\left(P_{2}\right).$$

\end{theorem}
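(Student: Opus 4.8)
The plan is to run the same subadditivity strategy as in the Euclidean proof, but to replace Pythagoras' theorem with a concavity argument driven by Lemma~\ref{lem:ineq}. The key observation is that for a regular spherical $n$-gon the perimeter is determined by the area, so I would first write $\mathrm{perim}=g(a)$ for a suitable function $g$, and then reduce the theorem to the two facts that $g(0)=0$ and that $g$ is \emph{strictly concave}. Granting these, I would extend $g$ continuously to $[0,a]$ and apply Lemma~\ref{lem:ineq} with endpoints $a=0$, $b=a_1+a_2=a$ and interior points $c=a_1$, $d=a_2$ (note $0<a_1,a_2<a$ and $0+(a_1+a_2)=a_1+a_2$), which yields $g(a_1)+g(a_2)>g(0)+g(a)=g(a)$, that is, exactly $\mathrm{perim}(P_1)+\mathrm{perim}(P_2)>\mathrm{perim}(P)$.

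To produce $g$ explicitly I would use spherical trigonometry. Writing $\theta$ for the interior angle, Gauss--Bonnet gives $a=n\theta-(n-2)\pi$, so $\theta$ (and hence the whole configuration) is an affine increasing function of $a$, with $\theta\to\tfrac{(n-2)\pi}{n}$ as $a\to0$. Cutting the $n$-gon into $2n$ congruent right triangles by the radii to the vertices and the apothems to the edge-midpoints, each such triangle has a right angle at an edge-midpoint, angle $\pi/n$ at the center, and angle $\theta/2$ at a vertex, with the half-edge opposite the central angle. The dual (angle) law of cosines then gives $\cos(\ell/2)=\cos(\pi/n)/\sin(\theta/2)$ for the side length $\ell$, so that
$$g=\mathrm{perim}=2n\cos^{-1}\!\left(\frac{\cos(\pi/n)}{\sin(\theta/2)}\right).$$
As $\theta\to\tfrac{(n-2)\pi}{n}$ the argument tends to $1$ and $g\to0$, confirming $g(0)=0$.

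Since $a$ is affine in $\theta$ and $\theta$ is affine in $u:=\theta/2$, strict concavity of $g$ in $a$ is equivalent to strict concavity of $u\mapsto f(u):=\cos^{-1}(k/\sin u)$, where $k:=\cos(\pi/n)$, on the range $u\in\big(\tfrac{(n-2)\pi}{2n},\tfrac{\pi}{2}\big)$; here $\sin u>k$, so $f$ is well defined. I expect the computation of $f''$ to be the main obstacle, though it is purely mechanical. Differentiating twice and simplifying (using $\sin^2u+\cos^2u=1$) reduces the sign of $f''$ to that of $-\big(2\sin^2u-\sin^4u-k^2\big)$, so it remains to verify the elementary inequality $2\sin^2u-\sin^4u>k^2$ throughout the range. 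Setting $x=\sin^2u\in(k^2,1)$, the left side equals $2x-x^2=1-(1-x)^2$, which is strictly increasing on $(0,1)$ and already exceeds $k^2$ at $x=k^2$ (there it equals $k^2(2-k^2)>k^2$). Hence $f''<0$, so $g$ is strictly concave, and the argument of the first paragraph completes the proof.
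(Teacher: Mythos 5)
Your proposal is correct and follows essentially the same route as the paper: the same spherical right-triangle identity $\cos(s/2)=\cos(\pi/n)/\sin(\theta/2)$, the same reduction via Lemma~\ref{lem:ineq} with the degenerate polygon as one endpoint, and the same strict-concavity verification (parametrizing by area rather than by interior angle is only an affine change of variable, so the two are interchangeable). Your simplification of the sign of $f''$ to that of $-(2\sin^2u-\sin^4u-k^2)$ checks out and is in fact a tidier way to see the concavity than the paper's term-by-term inspection.
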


\begin{proof}
Consider a triangulated section $\triangle OAM$ of the polygon, as shown in Figure \ref{fig:sph_triangle}. 
\begin{figure}[htbp] 
\centering
\includegraphics[scale=0.5]{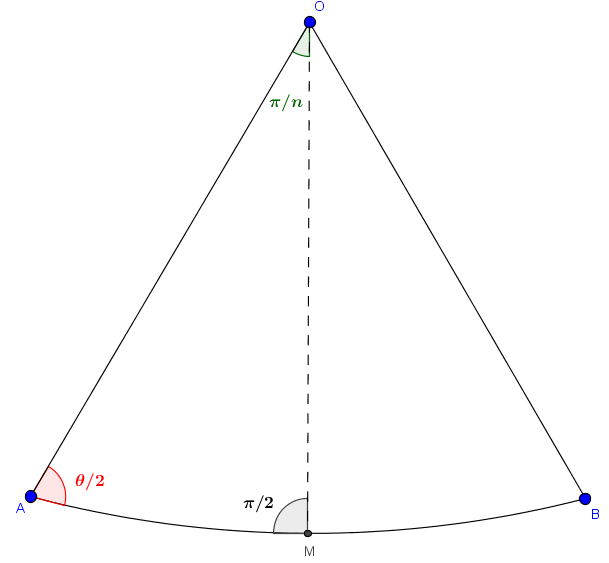}

\caption{A triangular section of a regular spherical $n$-gon. Here, $A,B$ are two consecutive vertices of the polygon, $O$ is the circumcenter and $M$ is the midpoint of $\overline{AB}$.}
\label{fig:sph_triangle}
\end{figure}
Using spherical trigonometric identities (see point $(62)$ in \cite{Tod}) to $\triangle OAM$, we get that 
\[
\cos\left(\frac{s}{2}\right)=\frac{\cos\left(\frac{\pi}{n}\right)}{\sin\left(\frac{\theta}{2}\right)},
\]
where $s$ equals the length of a side of the polygon. Thus, the perimeter of a regular $n$-gon with interior angle $\theta$ equals $$\mathrm{perim}(P) = 2n\arccos\left(\frac{\cos\left(\frac{\pi}{n}\right)}{\sin\left(\frac{\theta}{2}\right)}\right).$$

Now, let $\theta_{1},\theta_{2}$ and $\theta$ denote the interior angles of $P_{1},P_{2}$ and $P$ respectively. Then the condition $a_{1}+a_{2}=a$ translates to $$
\theta_{1}+\theta_{2}=\theta+\frac{(n-2)\pi}{n}.
$$
Therefore, to prove the inequality $
\mathrm{perim} \left( P \right) < \mathrm{perim}\left(P_{1}\right)+\mathrm{perim}\left(P_{2}\right)$, it suffices to prove
$$f\left(\theta\right)< f\left(\theta_{1}\right)+f\left(\theta_{2}\right) ,
$$
where $f:\left[\frac{(n-2)\pi}{n},\pi\right]\to\mathbb{R}$ is the function, defined by $$
f(x)=\arccos\left(\frac{\cos\left(\frac{\pi}{n}\right)}{\sin\left(\frac{x}{2}\right)}\right).$$
To prove this, we first show that $f$ is strictly concave in the domain $\left[\frac{(n-2)\pi}{n},\pi\right]$ which is the domain for an interior angle of a regular spherical $n$-gon. We include the degenerate case to simplify the analysis. To this extent, we compute that 
\begin{align*}
f''(x)= & -\frac{\cos\left(\frac{\pi}{n}\right)\csc^{3}\left(\frac{x}{2}\right)}{4\sqrt{1-\cos^{2}\left(\frac{\pi}{n}\right)\csc^{2}\left(\frac{x}{2}\right)}}-\frac{\cos\left(\frac{\pi}{n}\right)\cot^{2}\left(\frac{x}{2}\right)\csc\left(\frac{x}{2}\right)}{4\sqrt{1-\cos^{2}\left(\frac{\pi}{n}\right)\csc^{2}\left(\frac{x}{2}\right)}}\\
 & -\frac{\cos^{3}\left(\frac{\pi}{n}\right)\cot^{2}\left(\frac{x}{2}\right)\csc^{3}\left(\frac{x}{2}\right)}{4\left(1-\cos^{2}\left(\frac{\pi}{n}\right)\csc^{2}\left(\frac{x}{2}\right)\right)^{3/2}}.
\end{align*}
Since each term in this sum is negative when $x\in\left(\frac{(n-2)\pi}{n},\pi\right)$, we see that $f''(x)<0$ for all $x\in\left(\frac{(n-2)\pi}{n},\pi\right)$. Thus, $f$ is strictly concave in $\left[\frac{(n-2)\pi}{n},\pi\right]$. Now, the conclusion follows from Lemma~\ref{lem:ineq}. 
\end{proof}

We conclude this section, by noting that Proposition \ref{prop:weak} follows as a direct corollary.

\begin{cor}
Proposition \ref{prop:weak} holds when $\mathbb{M}=\mathbb{S}^{2}$. In particular, inequality \eqref{ineq:main} is an equality only in the case when $k=1$. 
\end{cor}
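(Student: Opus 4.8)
The plan is to prove the corollary by induction on the number $k$ of polygons in the configuration, using Theorem~\ref{thm:Sph} as the engine that merges two polygons into one. This mirrors the recursive argument already used for the Euclidean corollary, the only genuine difference being that we must track strictness, since on the sphere Theorem~\ref{thm:Sph} always produces a strict inequality rather than one with a degenerate equality case.

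For the base case $k=1$, the single polygon $P_1$ has area $a_1 = a = \mathrm{area}(P)$; since a regular spherical $n$-gon is determined up to isometry by its area (the interior angle is recovered from $a = n\theta - (n-2)\pi$, and the angle in turn fixes the side length through the identity $\cos(s/2) = \cos(\pi/n)/\sin(\theta/2)$ established in the proof of Theorem~\ref{thm:Sph}), we get $\mathrm{perim}(P_1) = \mathrm{perim}(P)$, so \eqref{ineq:main} holds with equality. This will turn out to be the only configuration for which equality can occur.

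For the inductive step I would assume $k \geq 2$ and that the statement holds for every configuration of fewer than $k$ regular spherical $n$-gons. Given areas $a_1,\dots,a_k$ with $\sum_{i=1}^k a_i = a$, I would merge the last two polygons: let $P'$ be the regular spherical $n$-gon of area $a_{k-1}+a_k$. Applying Theorem~\ref{thm:Sph} to the triple $P_{k-1}, P_k, P'$ gives $\mathrm{perim}(P') < \mathrm{perim}(P_{k-1}) + \mathrm{perim}(P_k)$, while the configuration $P_1,\dots,P_{k-2},P'$ consists of $k-1$ regular $n$-gons of total area $a$, so the inductive hypothesis yields $\mathrm{perim}(P) \leq \mathrm{perim}(P_1) + \dots + \mathrm{perim}(P_{k-2}) + \mathrm{perim}(P')$. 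Chaining these two inequalities gives the strict bound $\mathrm{perim}(P) < \sum_{i=1}^k \mathrm{perim}(P_i)$, which completes the induction and simultaneously shows that equality fails whenever $k \geq 2$; combined with the base case, this establishes the ``equality iff $k=1$'' refinement.

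I do not expect a serious obstacle, as this is a clean induction resting entirely on Theorem~\ref{thm:Sph}. The one point that requires care is verifying that the merged polygon $P'$ is genuinely realizable as a (non-degenerate) regular spherical $n$-gon, that is, that its prescribed area lies in the admissible range $(0,2\pi)$ swept out as $\theta$ runs over $\left(\tfrac{(n-2)\pi}{n},\pi\right)$. This holds because $0 < a_{k-1}+a_k \leq a = \mathrm{area}(P) < 2\pi$, so this is precisely where the spherical constraint $\theta < \pi$ enters; once the range condition is confirmed, carrying the strict inequality of Theorem~\ref{thm:Sph} through the recursion gives the sharpened conclusion with no further work.
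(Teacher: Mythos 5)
Your proposal is correct and matches the paper's argument, which is exactly the recursive application of Theorem~\ref{thm:Sph} to pairs of polygons that you carry out by induction on $k$. The extra care you take in tracking strictness and checking that the merged polygon's area stays in the admissible range $(0,2\pi)$ is a welcome elaboration of details the paper leaves implicit, but it is not a different method.
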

\begin{proof}
By applying Theorem \ref{thm:Sph} recursively to pairs of polygons, we have the corollary. 
\end{proof}

\section{Hyperbolic Geometry}\label{sec:Hyp_Geo}


The aim of this section is to prove Theorem \ref{thm:Hyp}. The strategy we use here is similar to the proof of Theorem \ref{thm:Sph}. 

Let $P$ be a regular hyperbolic $n$-gon with interior angle $\theta>0$. By the Gauss-Bonnet theorem, we have that 
\[
\mathrm{area}\left(P\right)=\left(n-2\right)\pi-n\theta,
\]
so that $\theta\in\left(0,\frac{(n-2)\pi}{n}\right)$. To compute the perimeter of $P$, consider a triangular section of $P$ as shown in Figure \ref{fig:hyptriangle}.

\begin{figure}[htbp] 
\centering
\includegraphics[scale=1]{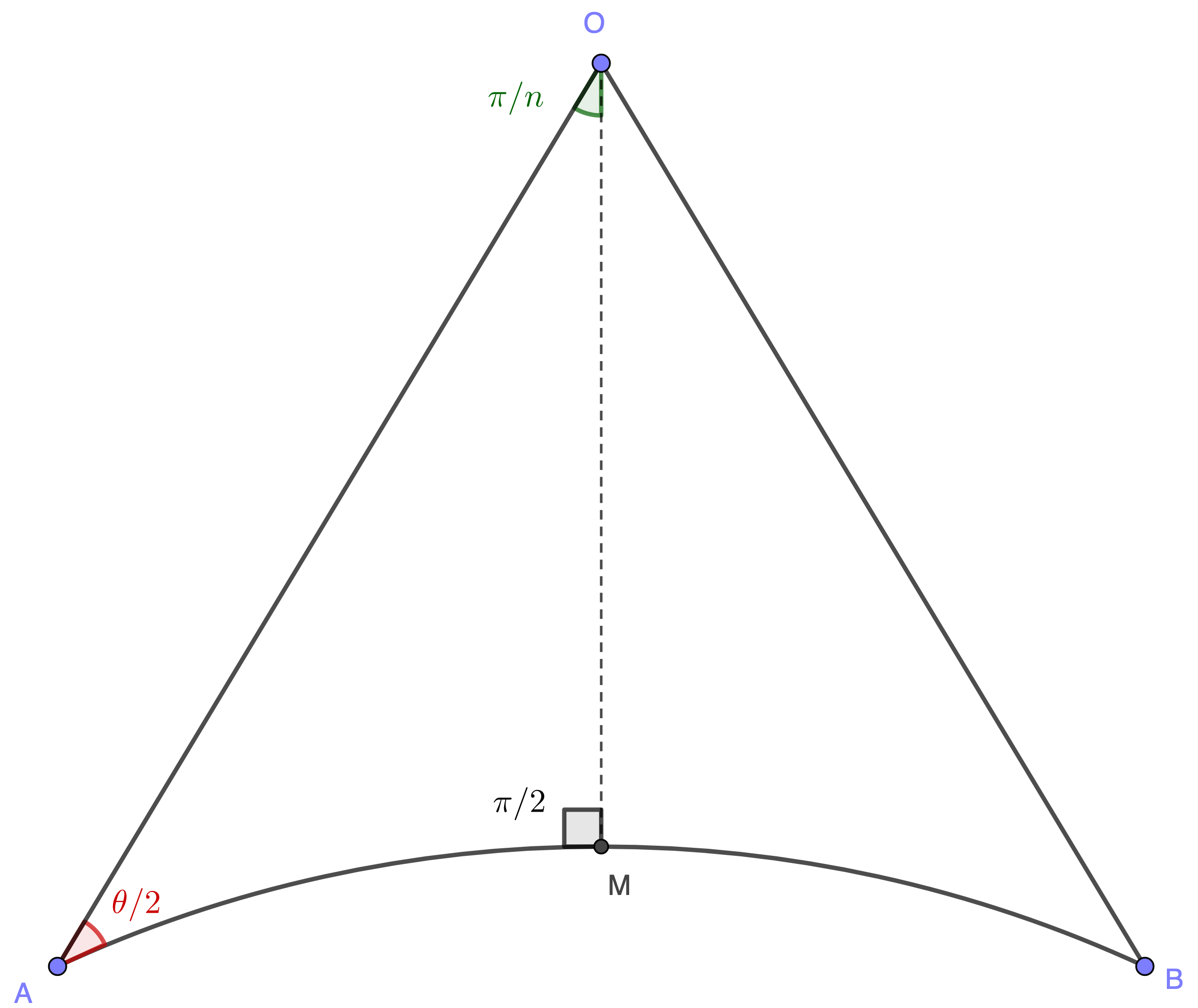}

\caption{A triangular section of a regular hyperbolic $n$-gon. Here, $A,B$ are two consecutive vertices of the polygon, $O$ is the circumcenter and $M$ is the midpoint of $\overline{AB}$.}
\label{fig:hyptriangle}
\end{figure}

Using hyperbolic trigonometric identities (see formula (v) in Theorem 2.2.2~\cite{Bus}) to $\triangle OAM$, we get that $$\cosh \left( \frac{s}{2} \right) = \frac{\cos\left(\frac{\pi}{n}\right)}{\sin\left(\frac{\theta}{2}\right)},$$ where $s$ is the length of a side of the regular $n$-gon $P$. As a result, $$ \mathrm{perim}\left(P\right)=2n\cosh^{-1}\left(\frac{\cos\left(\frac{\pi}{n}\right)}{\sin\left(\frac{\theta}{2}\right)}\right).$$

Similar to the Euclidean and spherical cases, we would first prove the result for configurations of at most two polygons.
\begin{theorem} \label{thm:Hyp_two}
Let $P$ be a regular hyperbolic $n$-gon (for $n\geq3$) with interior angle $\theta$. Then there exists a constant $\Theta=\Theta(n)$, depending only on $n$, such that 
\begin{enumerate}
\item If $\theta\geq\Theta$, then for any configuration of polygons $P_{1},P_2$ with total area equal to $\mathrm{area}(P)$, we have
$$ \mathrm{perim}\left(P\right)\leq \mathrm{perim}\left(P_1\right)+\mathrm{perim}\left(P_2\right).$$
If $\theta>\Theta$, then the inequality is strict. 
\item If $\theta<\Theta$, then there exists a configuration of polygons $Q_{1},Q_{2}$ with total area equal to $\mathrm{area}(P)$, satisfying 
\[
\mathrm{perim}\left(Q_1\right)+\mathrm{perim}\left(Q_2\right)<\mathrm{perim}\left(P\right).
\]
\end{enumerate}
\end{theorem}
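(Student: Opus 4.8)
The plan is to mirror the proof of Theorem~\ref{thm:Sph}, reducing the perimeter inequality to a one-variable inequality for the function
\[
f(x) = \cosh^{-1}\!\left(\frac{\cos(\pi/n)}{\sin(x/2)}\right), \qquad x \in \left(0, c\right], \quad c := \tfrac{(n-2)\pi}{n},
\]
so that $\mathrm{perim}(P) = 2n\,f(\theta)$ and, by Gauss--Bonnet, the area constraint $\mathrm{area}(P_1)+\mathrm{area}(P_2)=\mathrm{area}(P)$ becomes $\theta_1+\theta_2=\theta+c$. Since $\sin(c/2)=\cos(\pi/n)$ we have $f(c)=0$ (a degenerate, zero-area polygon contributes no perimeter), and $f$ is strictly decreasing. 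Thus it suffices to study $\phi(s):=f(s)+f(\theta+c-s)$ for $s\in[\theta,c]$: it is symmetric about the midpoint $m:=\frac{\theta+c}{2}$, with $\phi(\theta)=\phi(c)=f(\theta)$. The two-polygon inequality for $P$ holds precisely when $\phi(s)\geq f(\theta)$ for all admissible $s$.

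First I would determine the convexity type of $f$. Using $\cosh^{-1}(w)\sim\ln(2w)$ as $w\to\infty$ and $\cosh^{-1}(w)\sim\sqrt{2(w-1)}$ as $w\to1^+$, one sees that $f(x)\to+\infty$ like $-\ln(x/2)$ as $x\to0^+$ (so $f$ is convex near $0$) and $f(x)\to0$ like $\sqrt{c-x}$ as $x\to c^-$ (so $f$ is concave near $c$). The key structural claim --- and the first main obstacle --- is that $f$ has a \emph{single} inflection point, i.e.\ $f''$ changes sign exactly once; this needs a direct analysis of $f''$ analogous to, but more delicate than, the sign computation in Theorem~\ref{thm:Sph}, since here the terms no longer all share one sign. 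Granting this, $f'$ is unimodal, and a sign-chase of $\phi'(s)=f'(s)-f'(\theta+c-s)$ (using $\phi'(\theta^+)=+\infty$ and the symmetry about $m$) shows that the only interior local minimum of $\phi$ is the symmetric split $s=m$. Hence the perimeter-minimizing configuration is always the pair of equal polygons, and the inequality for $P$ holds if and only if
\[
H(\theta):=2\,f\!\left(\tfrac{\theta+c}{2}\right)-f(\theta)\geq0.
\]

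It then remains to establish the single-crossing property of $H$, the second main obstacle. The asymptotics give $H(\theta)\to-\infty$ as $\theta\to0^+$ (the $-\ln(\theta/2)$ term dominates), while $H(\theta)\to0^+$ as $\theta\to c^-$, since comparing the $\sqrt{c-x}$ coefficients yields a positive leading factor $(\sqrt2-1)$. I would show that $H$ changes sign exactly once --- for instance by proving that $H'(\theta)=f'\!\left(\frac{\theta+c}{2}\right)-f'(\theta)$ has at most one zero, so that $H$ is increasing-then-decreasing and, being eventually positive, crosses zero only once, from below. Defining $\Theta=\Theta(n)$ to be this unique zero yields the theorem: for $\theta>\Theta$ we have $H(\theta)>0$, so every genuine two-polygon configuration has strictly larger total perimeter (equality occurring only in the degenerate limits $s\in\{\theta,c\}$, where one polygon has zero area); for $\theta=\Theta$ the equality $H(\Theta)=0$ is realized by the equal split; and for $\theta<\Theta$ the equal split $Q_1,Q_2$ with common angle $\frac{\theta+c}{2}$ (each carrying half the area of $P$) satisfies $\mathrm{perim}(Q_1)+\mathrm{perim}(Q_2)<\mathrm{perim}(P)$, giving part~(2).
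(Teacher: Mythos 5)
Your reduction is exactly the paper's: write $\mathrm{perim}=2n\,g_n(\theta)$ with $g_n(x)=\cosh^{-1}\bigl(\cos(\pi/n)/\sin(x/2)\bigr)$, translate the area constraint into $\theta_1+\theta_2=\theta+\tfrac{(n-2)\pi}{n}$, show the equal split is the only interior local minimum of $h(s)=g_n(s)+g_n(\theta+c-s)$, and then define $\Theta$ as the unique sign change of $H(\theta)=2g_n\bigl(\tfrac{\theta+c}{2}\bigr)-g_n(\theta)$ (the paper's $\phi_n$). The boundary asymptotics you give, including the $(\sqrt{2}-1)$ coefficient forcing $H\to 0^+$ at the right endpoint, are correct and match the paper's conclusions.

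The genuine gap is in the middle step. You take as your ``key structural claim'' that $f''$ changes sign exactly once, i.e.\ that $f'$ is unimodal, and then assert that a sign-chase of $\phi'(s)=f'(s)-f'(\theta+c-s)$ shows the symmetric split is the only interior local minimum. Unimodality of $f'$ is not enough for this. Any non-midpoint critical point must have $s<x_0<T-s$ (with $T=\theta+c$ and $x_0$ the peak of $f'$), so writing $\sigma(s)$ for the conjugate point with $f'(\sigma(s))=f'(s)$ on the other side of $x_0$, the critical-point equation becomes $s+\sigma(s)=T$. For a merely unimodal $f'$ the map $\sigma$ is an essentially arbitrary decreasing bijection, so $s\mapsto s+\sigma(s)$ need not be monotone and the equation can have several solutions; two solutions in $(\theta,T/2)$ would create interior local minima of $\phi$ away from the midpoint, and your argument collapses. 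The paper avoids this by proving the strictly stronger statement $g_n'''<0$ (Lemma~\ref{lem:der_convex}, a nontrivial trigonometric estimate) and feeding that concavity of $g_n'$ into the uniqueness argument of Lemma~\ref{lem:local_min}; unimodality of $f'$ is then a consequence, not the input. Separately, the two steps you flag as ``main obstacles'' --- the sign analysis of $f''$ and the single-crossing property of $H$ --- are precisely the paper's Lemmas~\ref{lem:der_convex} and~\ref{lem:one_root} and carry essentially all of the technical weight (the latter also needs the comparison $\tfrac{x}{2}+\tfrac{\pi}{2}-\tfrac{\pi}{n}>x$ to locate $x_1,x_2$ relative to the inflection point $x_0$); as written your proposal names them but does not carry either out, so it is an outline of the right strategy resting on an insufficient hypothesis at its central step.
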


Before proceed to the proof of Theorem~\ref{thm:Hyp_two}, we develop some technical lemmas.

\subsection{Technical Lemmas}
 For the remainder of this section, we define the function $g_{n}:\left(0,\frac{(n-2)\pi}{n}\right)\to\mathbb{R}$ for each $n\geq3$, given by \begin{equation}\label{eq:g}
 g_{n}(x)=\cosh^{-1}\left(\frac{\cos\left(\frac{\pi}{n}\right)}{\sin\left(\frac{x}{2}\right)}\right), 
\end{equation} so that $\mathrm{perim}\left(P\right)=2n\cdot g_{n}\left(\theta\right)$. The proof of Theorem \ref{thm:Hyp_two} involves analysing the function $g_n$, and now we set up the technical lemmas in this regard. 
\begin{lemma} \label{lem:der_convex}
For any $n\geq3$, the derivative $g'_{n}:\left(0,\frac{(n-2)\pi}{n}\right)\to\mathbb{R}$ of $g_{n}$ is strictly concave over its domain. 
\end{lemma}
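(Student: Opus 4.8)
The plan is to establish strict concavity of $g_n'$ by proving $g_n'''<0$ on the open interval $\left(0,\frac{(n-2)\pi}{n}\right)$, where $g_n$ is smooth. Writing $c=\cos\left(\frac{\pi}{n}\right)$, $p=\sin\left(\frac{x}{2}\right)$ and $q=\cos\left(\frac{x}{2}\right)$, differentiating \eqref{eq:g} once gives $g_n'(x)=-\frac{c}{2}\,u(x)$ with $u=\frac{q}{p\sqrt{c^2-p^2}}>0$. Since the prefactor $-\frac{c}{2}$ is a negative constant, $g_n'''<0$ is equivalent to $u''>0$, i.e. to convexity of the positive function $u$. This reformulation is the whole point: it lets me avoid differentiating $\cosh^{-1}$ three times directly.

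I would then pass to the logarithmic derivative $L:=(\log u)'$, using the identity $\frac{u''}{u}=L'+L^2$; as $u>0$, it suffices to prove $L'+L^2>0$. Differentiating $\log u=\log q-\log p-\frac12\log(c^2-p^2)$ and substituting $p'=\frac{q}{2}$, $q'=-\frac{p}{2}$, the expression for $L$ collapses to the compact form
\[
L=\frac{pq}{2(c^2-p^2)}-\frac{1}{2pq}.
\]
Computing $L'$, clearing the positive common denominator $4p^2q^2(c^2-p^2)^2$, and eliminating $q^2=1-a$ via the substitution $a=p^2$, $C=c^2$, reduces $L'+L^2>0$ to positivity of the quartic
\[
\mathcal{N}(a)=a^4+(2C-7)a^3+(3C+6)a^2-(2C^2+5C)a+2C^2
\]
on the admissible range $a\in(0,C)$, where $C=\cos^2\left(\frac{\pi}{n}\right)\in\left[\tfrac14,1\right)$.

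The decisive step is a factorization that peels off the geometric constraint: one checks $\mathcal{N}(a)=(1-a)\,\mathcal{P}(a)$ with $\mathcal{P}(a)=2C^2-a(2a+5)C+a^2(6-a)$, and since $a<C<1$ the factor $1-a$ is positive, so everything reduces to $\mathcal{P}>0$. I would read $\mathcal{P}$ as a quadratic in $C$ with positive leading coefficient and discriminant $\Delta=a^2(4a^2+28a-23)$. When $\Delta<0$ — which holds for all sufficiently small $a$, and in fact throughout the admissible range when $n$ is small — positivity of $\mathcal{P}$ is immediate. When $\Delta\ge0$ it suffices that the admissible value $C<1$ lie to the left of the smaller root of $\mathcal{P}$ in $C$, which is guaranteed by showing that smaller root exceeds $1$; after squaring this comes down to the clean identity
\[
\bigl[a(2a+5)-4\bigr]^2-a^2\bigl(4a^2+28a-23\bigr)=8(1-a)^2(2-a)>0.
\]
Combining the two cases yields $\mathcal{P}>0$, hence $\mathcal{N}>0$, hence $u''>0$ and $g_n'''<0$.

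The main obstacle is that, unlike the spherical computation in Theorem~\ref{thm:Sph} where $f''$ was visibly a sum of negative terms, here the terms of $g_n'''$ carry competing signs once $\sin^2\left(\frac{x}{2}\right)>\frac12$ (which can occur for $n\ge5$), so the sign cannot be read off termwise. The two devices that make the argument tractable are the logarithmic-derivative reduction, which replaces a third-derivative computation by the single quantity $L'+L^2$ together with the fortunate simplification of $L$, and the factorization $\mathcal{N}=(1-a)\mathcal{P}$ followed by the discriminant analysis, which converts the remaining quartic positivity into one quadratic-in-$C$ statement that is closed off by the identity above.
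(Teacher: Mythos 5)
Your proposal is correct, and while it shares the paper's overall plan --- prove $g_n'''<0$ on the open interval --- the way you establish that sign is genuinely different from ours. We differentiate $g_n'$ twice more in trigonometric form, reduce $g_n'''<0$ to the inequality \eqref{ineq:trip_der}, and close it by case analysis on the sign of the bracket $1-2\cos\left(\frac{2\pi}{n}\right)-3\cos(x)$ together with several termwise trigonometric bounds. You instead write $g_n'=-\frac{c}{2}u$ with $u=\frac{q}{p\sqrt{c^2-p^2}}>0$, pass to $L=(\log u)'$ and use $\frac{u''}{u}=L'+L^2$, so that the third derivative never has to be computed; the fortunate collapse $L=\frac{pq}{2(c^2-p^2)}-\frac{1}{2pq}$ (which I have checked, using $p^2+q^2=1$) turns the problem into positivity of a polynomial in $a=\sin^2\left(\frac{x}{2}\right)$ and $C=\cos^2\left(\frac{\pi}{n}\right)$, and your factorization $\mathcal{N}=(1-a)\mathcal{P}$ and discriminant analysis of $\mathcal{P}$ as a quadratic in $C$ both check out, including the identity $\left[a(2a+5)-4\right]^2-a^2\left(4a^2+28a-23\right)=8(1-a)^2(2-a)$. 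What your route buys is a fully algebraic, case-light argument whose final inequalities are polynomial identities rather than trigonometric estimates; what it costs is the less transparent intermediate object $L'+L^2$. One detail you should make explicit: concluding that the smaller root of $\mathcal{P}$ in $C$ exceeds $1$ from the squared identity requires $a(2a+5)-4>0$, which does hold throughout the regime $\Delta\geq0$ (there $a\geq\frac{6\sqrt{2}-7}{2}\approx0.743$, while $a(2a+5)=4$ already at $a\approx0.637$), but this must be said before squaring. Your parenthetical about when the competing signs appear is slightly off (the first bracket in our expression for $g_n'''$ can change sign even for $n=3,4$), but that remark is only motivational and does not affect your argument.
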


\begin{proof}
Fixing $n\geq3$, we first we compute that 
\begin{align*}
g'_{n}(x) & =-\frac{\cos\left(\frac{\pi}{n}\right)\cot\left(\frac{x}{2}\right)}{2\sqrt{\cos^{2}\left(\frac{\pi}{n}\right)-\sin^{2}\left(\frac{x}{2}\right)}}\\
 & =-\left(\frac{\cos\left(\frac{\pi}{n}\right)}{\sqrt{2}}\right)\frac{\cot\left(\frac{x}{2}\right)}{\sqrt{\cos\left(\frac{2\pi}{n}\right)+\cos(x)}}.
\end{align*}
Then 
\[
g''_{n}(x)\cdot\left(\frac{2\sqrt{2}}{\cos\left(\frac{\pi}{n}\right)}\right)=\frac{\csc^{2}\left(\frac{x}{2}\right)}{\sqrt{\cos\left(\frac{2\pi}{n}\right)+\cos(x)}}-\frac{\sin(x)\cot\left(\frac{x}{2}\right)}{\left(\cos\left(\frac{2\pi}{n}\right)+\cos(x)\right)^{3/2}},
\]
and 
\begin{align*}
g'''_{n}(x)\cdot\left(\frac{4\sqrt{2}}{\cos\left(\frac{\pi}{n}\right)}\right)= & -\frac{\cot\left(\frac{x}{2}\right)\csc^{2}\left(\frac{x}{2}\right)\left[2\cos\left(\frac{2\pi}{n}\right)+3\cos(x)-1\right]}{\left(\cos\left(\frac{2\pi}{n}\right)+\cos(x)\right)^{3/2}}\\
 & -\frac{\sin(x)\left[\cos(x)+3-2\cos\left(\frac{2\pi}{n}\right)\right]}{\left(\cos\left(\frac{2\pi}{n}\right)+\cos(x)\right)^{5/2}}.
\end{align*}

We need to show that $g'''_{n}(x)<0$ for each $x\in\left(0,\frac{(n-2)\pi}{n}\right)$. Since $\cos\left(\frac{2\pi}{n}\right)+\cos(x)>0$ in the domain, this is equivalent to show that
\begin{align*}
\cot\left(\frac{x}{2}\right)\csc^{2}\left(\frac{x}{2}\right)\left[2\cos\left(\frac{2\pi}{n}\right)+3\cos(x)-1\right]\left[\cos\left(\frac{2\pi}{n}\right)+\cos(x)\right]\\
+\sin(x)\left[\cos(x)+3-2\cos\left(\frac{2\pi}{n}\right)\right] & >0.
\end{align*}

For $n\geq3$, we have $$ \cos(x)+3-2\cos\left(\frac{2\pi}{n}\right)>1+\cos(x).$$
Since $\cot\left(\frac{x}{2}\right)\csc^{2}\left(\frac{x}{2}\right)>0$ in the domain, it suffices to show that 
\begin{align}
\left[1-2\cos\left(\frac{2\pi}{n}\right)-3\cos(x)\right] \left[\cos\left(\frac{2\pi}{n}\right)+\cos(x)\right] & <\sin^{2}(x)\sin^{2}\left(\frac{x}{2}\right).\label{ineq:trip_der}
\end{align}

Now, the right hand side of inequality~\eqref{ineq:trip_der} is always positive in the domain. The left hand side of inequality~\eqref{ineq:trip_der} is positive if and only if 
\begin{equation}\label{ineq:cos(x)}
\cos(x)<\frac{1-2\cos\left(\frac{2\pi}{n}\right)}{3}.
\end{equation}
We note that, when $\cos(x)$ is greater than or equal to $\frac{1-2\cos\left(\frac{2\pi}{n}\right)}{3}$, inequality~\eqref{ineq:trip_der} would hold directly. Now, when inequality \eqref{ineq:cos(x)} is indeed satisfied, from inequality~\eqref{ineq:trip_der}, we get that
\begin{align*}
\sin^{2}(x)\sin^{2}\left(\frac{x}{2}\right) & >\frac{1}{2}\left[\frac{4-2\cos\left(\frac{2\pi}{n}\right)}{3}\right]\left[\frac{2+2\cos\left(\frac{2\pi}{n}\right)}{3}\right]^{2}\\
 & =\frac{4}{9}\left[2-\cos\left(\frac{2\pi}{n}\right)\right]\left[1+\cos\left(\frac{2\pi}{n}\right)\right]^{2}.
\end{align*}
We also have 
\[
\cos\left(\frac{2\pi}{n}\right)+\cos(x)<\frac{1+\cos\left(\frac{2\pi}{n}\right)}{3},
\]
and for $x\in\left(0,\frac{(n-2)\pi}{n}\right)$,
\[
-3\cos(x)<-3\cos\left(\frac{(n-2)\pi}{n}\right)=3\cos\left(\frac{2\pi}{n}\right).
\]

Thus, it suffices to show that 
\[
1+\cos\left(\frac{2\pi}{n}\right)<\frac{4}{3}\left[1+\cos\left(\frac{2\pi}{n}\right)\right]\left[2-\cos\left(\frac{2\pi}{n}\right)\right],
\]
which is clearly true for any $n\geq3$. 
\end{proof}

\begin{lemma} \label{lem:local_min}
Let $c\in\left( \frac{(n-2)\pi}{n},\frac{(2n-4)\pi}{n}\right)$ be a fixed constant. Consider the function $h_{n}:\left(c-\frac{(n-2)\pi}{n},\frac{(n-2)\pi}{n}\right),$ defined by $$
h_{n}(x)=g_{n}(x)+g_{n}(c-x).$$
Then the only possible local minimum for $h_{n}$ in its domain is at $x=\frac{c}{2}$. 
\end{lemma}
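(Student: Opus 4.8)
The plan is to study the critical points of $h_n$ through its derivative $h_n'(x)=g_n'(x)-g_n'(c-x)$ and to exploit the strict concavity of $g_n'$ from Lemma~\ref{lem:der_convex}. Writing $L=\frac{(n-2)\pi}{n}$, the domain $\left(c-L,L\right)$ is symmetric about $c/2$, and the identity $h_n'(c-x)=-h_n'(x)$ shows that $h_n'$ is antisymmetric about $c/2$; in particular $h_n'(c/2)=0$, so $c/2$ is always a critical point, and any off-center local minimum would be accompanied by a symmetric partner $c-x_0$. From the explicit formula for $g_n'$ one reads off that $g_n'(x)\to-\infty$ as $x\to 0^+$ and as $x\to L^-$, while $g_n'$ is finite in the interior; combined with strict concavity, this forces $g_n'$ to be unimodal, strictly increasing up to a unique interior maximum at some $x^{*}$ and strictly decreasing thereafter. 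Consequently $h_n'(x)\to+\infty$ as $x\to(c-L)^+$ and $h_n'(x)\to-\infty$ as $x\to L^-$.

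First I would reduce the statement to a counting claim: it suffices to show that $h_n'$ has at most three zeros in $\left(c-L,L\right)$. Indeed, since $h_n'$ is positive near the left endpoint and negative near the right endpoint, its number of sign changes is odd. If $h_n'$ vanishes only at $c/2$, then $h_n'>0$ to the left and $h_n'<0$ to the right of $c/2$, so $c/2$ is a local maximum and $h_n$ has no local minimum at all. If $h_n'$ has exactly three zeros $x_0<c/2<c-x_0$, the forced alternating sign pattern (positive, negative, positive, negative across the four subintervals) makes $x_0$ and $c-x_0$ local maxima and $c/2$ the unique local minimum. An off-center local minimum would require an additional crossing from negative to positive on each side, hence at least five zeros.

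Next I would translate ``at most three zeros'' into a single inequality at critical points. By unimodality, a zero $x_0\neq c/2$ of $h_n'$ has $x_0$ and $c-x_0$ lying on opposite sides of the peak $x^{*}$ with $g_n'(x_0)=g_n'(c-x_0)$; parametrizing such equal-height pairs by their common value $v$ and tracking the midpoint $m(v)=\frac12\left(a(v)+b(v)\right)$ of the two solutions $a(v)<x^{*}<b(v)$ of $g_n'=v$, off-center critical points correspond exactly to solutions of $m(v)=c/2$. A short computation gives that the sign of $m'(v)$ is opposite to that of $g_n''(x_0)+g_n''(c-x_0)=h_n''(x_0)$. Hence $m$ is strictly monotone --- so $m(v)=c/2$ has at most one solution and $h_n'$ has at most three zeros --- precisely when $h_n''$ keeps a constant sign along the critical locus, and that sign is negative exactly when the off-center critical points are maxima. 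Thus the whole lemma reduces to the single inequality
\[
h_n''(x_0)=g_n''(x_0)+g_n''(c-x_0)<0 \quad\text{whenever } x_0\neq c/2 \text{ and } g_n'(x_0)=g_n'(c-x_0).
\]

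The main obstacle is this slope-sum inequality, which says that on the concave curve $g_n'$ the descending branch is strictly steeper in magnitude than the ascending branch at two points of equal height whose midpoint is $c/2$. Concavity of $g_n'$ alone is not enough here --- a symmetric concave function gives equality --- so the specific skewness of $g_n'$ must enter. I would attack it by combining the fact that $g_n''=(g_n')'$ is strictly decreasing (equivalently $g_n'''<0$, which is Lemma~\ref{lem:der_convex}) with the explicit formula for $g_n''$ recorded in the proof of that lemma and with the critical relation $g_n'(x_0)=g_n'(c-x_0)$; substituting the latter to eliminate one variable should reduce the desired inequality $g_n''(x_0)<-g_n''(c-x_0)$ to a trigonometric inequality in $x_0$, $c-x_0$ and $\pi/n$ valid throughout the admissible range. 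Verifying this estimate, and separately pinning down the sign of $m'$ from the boundary behaviour, is where I expect the real work to lie.
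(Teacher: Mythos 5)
Your reduction is sound as far as it goes: passing to $h_n'(x)=g_n'(x)-g_n'(c-x)$, using the antisymmetry about $\frac{c}{2}$, the boundary behaviour $h_n'\to+\infty$ at the left endpoint and $h_n'\to-\infty$ at the right endpoint, and observing that an off-centre local minimum would force extra sign changes of $h_n'$, is exactly the right skeleton (and is how the paper's own proof begins). In fact your reduction can be shortened considerably: a local minimum of $h_n$ at an interior point $x_0\neq\frac{c}{2}$ is a critical point with $h_n''(x_0)\geq 0$, so the lemma already follows once one shows $g_n''(x_0)+g_n''(c-x_0)<0$ at every off-centre critical point; the whole apparatus of the midpoint function $m(v)$ is not needed. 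Your observation that strict concavity of $g_n'$ alone cannot yield this --- a concave $g_n'$ symmetric about $\frac{c}{2}$ makes every point critical, and a small concavity-preserving asymmetric perturbation can then manufacture genuine off-centre minima --- is correct and is a real insight.

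The problem is that the proposal stops exactly where the proof has to start. The inequality $g_n''(x_0)+g_n''(c-x_0)<0$ at equal-height pairs of $g_n'$ straddling its peak is the entire analytic content of the lemma, and you explicitly defer it (``where I expect the real work to lie'') without indicating why it should hold for this particular $g_n$. Lemma~\ref{lem:der_convex} gives $g_n'''<0$, i.e.\ $g_n''$ strictly decreasing, which only tells you $g_n''(x_0)>0>g_n''(c-x_0)$ and says nothing about which term dominates in absolute value; no estimate, not even a heuristic one, is offered for the claimed dominance of the descending branch. So as written this is a correct reframing plus an unproven claim, not a proof. For comparison, the paper argues instead that $g_n'(x)=g_n'(c-x)$ has at most one solution in $\left(c-\frac{(n-2)\pi}{n},\frac{c}{2}\right)$ and then classifies the at most three critical points using the boundary behaviour of $h_n'$; it tries to extract that uniqueness from strict concavity of $g_n'$ alone via a mean-value argument, which --- as your symmetric counterexample indicates --- cannot suffice as stated either. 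Either route ultimately needs a genuine quantitative input about $g_n$ beyond Lemma~\ref{lem:der_convex}, and your proposal does not supply it.
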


\begin{proof}
By differentiating the function $h_n$, we have $$ h_n'(x) = g'_{n} (x) - g'_{n} (c-x).$$
Therefore, at a point $x$ of local extremum of $h_{n}$, we have 
\begin{equation}\label{eq:der_roots}
g_{n}'(x)=g_{n}'(c-x).
\end{equation}
Since $x=\frac{c}{2}$ is an obvious solution to equation~\eqref{eq:der_roots}, this is one possible candidate for a local minimum. Similarly, if $x=x_{0}$ is a solution to equation~\eqref{eq:der_roots}, then so is $x=c-x_{0}$. Therefore, it suffices to check for possible solutions to equation~\eqref{eq:der_roots} in the interval $\left(c-\frac{(n-2)\pi}{n},\frac{c}{2}\right)$. 

We claim that there can be at most one solution to equation~\eqref{eq:der_roots} in $\left(c-\frac{(n-2)\pi}{n},\frac{c}{2}\right)$. Suppose, for the sake of contradiction, that $x_{1}<x_{2}$ are two distinct solutions to \eqref{eq:der_roots} in the interval  $\left(c-\frac{(n-2)\pi}{n},\frac{c}{2}\right)$ and $\delta = x_{2}-x_{1}>0$. Then, we have
\begin{equation}\label{eq:disprove}
g_{n}'\left(x_{1}+\delta\right)-g_{n}'\left(x_{1}\right)=g_{n}'\left(c-x_{1}-\delta\right)-g_{n}'\left(c-x_{1}\right).
\end{equation}
However, by Lemma \ref{lem:der_convex}, the function $g_{n}'$ is strictly concave. Thus, for a fixed constant $x_{1}$, the function $g_{n}'\left(x_{1}+y\right)-g_{n}'\left(x_{1}\right)$ strictly decreases as $y$ increases in $\left(0,\frac{c}{2}-x_{1}\right)$ (this is an elementary application of the mean value theorem). Similarly, for the fixed constant $c-x_{1}$, the function $g'_{n}\left(c-x_{1}-y\right)-g'_{n}\left(c-x_{1}\right)$ strictly increases as $y$ increases in $\left(0,\frac{c}{2}-x_{1}\right)$. Since $x_{1}$ is a solution to equation \eqref{eq:der_roots}, we get that 
\[
g_{n}'\left(\frac{c}{2}\right)-g_{n}'\left(x_{1}\right)=g_{n}'\left(\frac{c}{2}\right)-g_{n}'\left(c-x_{1}\right),
\]
so that $y=\frac{c}{2}-x_{1}$ is a solution to equation \eqref{eq:disprove}. Thus, for any $\delta<\frac{c}{2}-x_{1}$, equation \eqref{eq:disprove} cannot be satisfied, and thus we have a contradiction.

Now, note that $$ \lim_{x\to \left(c-\frac{(n-2)\pi}{n}\right)^{+}}g_{n}'(x)=+\infty, $$ and $$ \lim_{x\to\left(\frac{(n-2)\pi}{n}\right)^{-}}g'_{n}(x)=-\infty.$$
Thus, we have two possibilities below:
\begin{enumerate}
\item Equation \eqref{eq:der_roots} has no solutions in $\left(c-\frac{(n-2)\pi}{n},\frac{c}{2}\right)$. In this case, the only local extremum of $h_{n}$ is at $x=\frac{c}{2}$, which is a local maximum. 
\item Equation~\eqref{eq:der_roots} has exactly one solution $x_{0}\in\left(c-\frac{(n-2)\pi}{n},\frac{c}{2}\right)$. In this case, $x_{0}$ and $c-x_{0}$ are local maxima of $h_{n}$, and $\frac{c}{2}$ is a local minimum of $h_{n}$.
\end{enumerate}
This concludes the argument.
\end{proof}

\begin{lemma} \label{lem:one_root}
The function $\phi_{n}:\left(0,\frac{(n-2)\pi}{n}\right)\to\mathbb{R}$, defined by $$ \phi_{n}(x)=2 g_{n} \left( \frac{x}{2} + \frac{\pi}{2}-\frac{\pi}{n}\right)-g_{n}(x)$$
has a unique root in the domain $\left(0,\frac{(n-2)\pi}{n}\right)$. 
\end{lemma}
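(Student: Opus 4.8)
The plan is to study $\phi_n$ through its derivative, exploiting the strict concavity of $g_n'$ from Lemma~\ref{lem:der_convex}. First I would record the key simplification: writing $b=\frac{(n-2)\pi}{n}$, one has $\frac{x}{2}+\frac{\pi}{2}-\frac{\pi}{n}=\frac{x+b}{2}$, the midpoint of $x$ and $b$, so that $\phi_n(x)=2g_n\!\left(\frac{x+b}{2}\right)-g_n(x)$ and hence $\phi_n'(x)=g_n'\!\left(\frac{x+b}{2}\right)-g_n'(x)$. The boundary behaviour is then immediate: as $x\to b^-$ both $g_n(x)$ and $g_n\!\left(\frac{x+b}{2}\right)$ tend to $\cosh^{-1}(1)=0$, so $\phi_n\to 0$; while as $x\to 0^+$ we have $g_n(x)\to+\infty$ but $g_n\!\left(\frac{x+b}{2}\right)\to g_n(b/2)$ finite, so $\phi_n\to-\infty$.

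Next I would pin down the shape of $g_n'$. From the displayed formula for $g_n'$ in the proof of Lemma~\ref{lem:der_convex}, a direct check gives $g_n'(x)\to-\infty$ both as $x\to0^+$ and as $x\to b^-$. Since Lemma~\ref{lem:der_convex} says $g_n'$ is strictly concave, it therefore attains a strict maximum at a unique interior point $x^\ast\in(0,b)$, with $g_n''>0$ on $(0,x^\ast)$ and $g_n''<0$ on $(x^\ast,b)$.

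The heart of the argument, and the step I expect to be the main obstacle, is showing that $\phi_n$ has at most one critical point. Suppose $\phi_n'(x_c)=0$, i.e. $g_n'\!\left(\frac{x_c+b}{2}\right)=g_n'(x_c)$ at the two distinct points $x_c<\frac{x_c+b}{2}$. Because $g_n'$ is strictly concave (strictly increasing then strictly decreasing about $x^\ast$), equal values at distinct points force these points onto opposite sides of $x^\ast$, that is $x_c<x^\ast<\frac{x_c+b}{2}$. Consequently $g_n''(x_c)>0$ and $g_n''\!\left(\frac{x_c+b}{2}\right)<0$, so
\[
\phi_n''(x_c)=\tfrac12\,g_n''\!\left(\tfrac{x_c+b}{2}\right)-g_n''(x_c)<0 .
\]
Thus $\phi_n'$ is strictly decreasing through each of its zeros, which is impossible for two distinct zeros (on the interval between two consecutive zeros $\phi_n'$ would have to be both negative just after the first and positive just before the second); hence $\phi_n'$ vanishes at most once.

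Finally I would assemble these facts. Near $0^+$ we have $\phi_n'(x)=g_n'\!\left(\frac{x+b}{2}\right)-g_n'(x)\to+\infty$, while for $x\in(x^\ast,b)$ both $x$ and $\frac{x+b}{2}$ lie in $(x^\ast,b)$, where $g_n'$ is strictly decreasing, so $\frac{x+b}{2}>x$ gives $\phi_n'(x)<0$. Hence $\phi_n'$ changes sign and, being zero at most once, has a single zero $x_c$; thus $\phi_n$ strictly increases on $(0,x_c)$ and strictly decreases on $(x_c,b)$. On $(x_c,b)$ the function $\phi_n$ decreases to its limit $0$ and so stays positive, giving $\phi_n(x_c)>0$; on $(0,x_c)$ it increases from $-\infty$ to $\phi_n(x_c)>0$, crossing $0$ exactly once. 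Therefore $\phi_n$ has a unique root, as claimed.
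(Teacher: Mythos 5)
Your argument is correct and follows essentially the same route as the paper: both use the boundary limits $\phi_n\to-\infty$ at $0^+$ and $\phi_n\to 0$ at $\left(\tfrac{(n-2)\pi}{n}\right)^-$, together with the strict concavity of $g_n'$ (Lemma~\ref{lem:der_convex}) and its unique interior maximum to show $\phi_n'$ vanishes at most once. The only cosmetic difference is that you certify uniqueness of the critical point via $\phi_n''<0$ at every zero of $\phi_n'$, whereas the paper derives a direct monotonicity contradiction from the fact that $g_n'(x)=c$ has exactly two solutions straddling $x_0$; both rest on the same structural fact.
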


\begin{proof}
First, note that as $x$ decreases to $0$, the function $g_{n}\left(\frac{x}{2}+\frac{\pi}{2}-\frac{\pi}{n}\right)$ approaches to $g_{n}\left(\frac{\pi}{2}-\frac{\pi}{n}\right)$ (which is finite), while the function $g_{n}(x)$ grows to $+\infty$. Thus, $$ \lim_{x\to0^{+}}\phi_{n}(x)=-\infty.$$
Now, by Lemma~\ref{lem:der_convex} and the facts that $$
\lim_{x\to0^{+}}g'_{n}(x)=\lim_{x\to\frac{(n-2)\pi}{n}^{-}}g'_{n}(x)=-\infty,$$ we see that $g''_{n}(x)$ has a unique root $x_{0}\in\left(0,\frac{(n-2)\pi}{n}\right)$. Note that the value $x_{0}$ depends only on $n$. For $\theta>x_{0}$, the function $g_{n}$ restricted on the domain ${\left(\theta,\frac{(n-2)\pi}{n}\right)}$ is strictly concave, so that $$2g_{n}\left(\frac{\theta}{2}+\frac{(n-2)\pi}{2n}\right)>g_{n}\left(\theta\right).$$
Thus, for $\theta\in\left(x_{0},\frac{(n-2)\pi}{n}\right)$, $\phi_{n}(\theta)>0$. Hence, $\phi_{n}$ has at least one root in the domain $\left(0,x_{0}\right)$. 

To see that $\phi_{n}$ has exactly one root in $\left(0,\frac{(n-2)\pi}{n}\right)$, we will show that $\phi'_{n}$ has at most one root in this domain. The result will follow by noting that 
\[
\lim_{x\to\frac{(n-2)\pi}{n}}\phi_{n}(x)=0.
\]
For the sake of contradiction, suppose $x_{1}<x_{2}$ are two zeroes of $\phi_{n}'$ in $\left(0,\frac{(n-2)\pi}{n}\right)$. Then $x_{1},x_{2}$ satisfy 
\begin{align}
\phi_{n}'\left(x\right)=g'_{n}\left(\frac{x}{2}+\frac{\pi}{2}-\frac{\pi}{n}\right)-g'_{n}(x) & =0\nonumber \\
\implies g'_{n}\left(\frac{x}{2}+\frac{\pi}{2}-\frac{\pi}{n}\right) & =g'_{n}(x).\label{eq:one_root}
\end{align}
Once again, since $g'$ is convex (by Lemma \ref{lem:der_convex}) and $g''$ has a unique root $x_{0}$, for any real number $c<g_{n}'\left(x_{0}\right)$, we see that there are exactly two solutions $\alpha$ and $\beta$ to the equation $$g'_{n}(x)=c,$$
with $0<\alpha<x_{0}<\beta<\frac{(n-2)\pi}{n}$. Moreover, since $$ \frac{x}{2}+\frac{\pi}{2}-\frac{\pi}{n}>x, $$ for $x\in\left(0,\frac{(n-2)\pi}{n}\right)$, we have that $$ x_{1}<x_{0}<\frac{x_{1}}{2}+\frac{\pi}{2}-\frac{\pi}{n}.$$
Since $x_{2}$ also satisfies \eqref{eq:one_root}, and $x_{2}>x_{1}$, we further have that 
\[
x_{1}<x_{2}<x_{0}<\frac{x_{1}}{2}+\frac{\pi}{2}-\frac{\pi}{n}<\frac{x_{2}}{2}+\frac{\pi}{2}-\frac{\pi}{n}.
\]
However, $x_{1}<x_{2}<x_{0}$ implies that $g_{n}'\left(x_{1}\right)<g_{n}'\left(x_{2}\right),$ and $x_{0}<\frac{x_{1}}{2}+\frac{\pi}{2}-\frac{\pi}{n}<\frac{x_{2}}{2}+\frac{\pi}{2}-\frac{\pi}{n}$ implies that $g_{n}'\left(\frac{x_{1}}{2}+\frac{\pi}{2}-\frac{\pi}{n}\right)>g_{n}'\left(\frac{x_{2}}{2}+\frac{\pi}{2}-\frac{\pi}{n}\right)$. Thus, $x_{1},x_{2}$ cannot both satisfy equation \eqref{eq:one_root}, and we reach our desired contradiction.  
\end{proof}

\subsection{Proof of Theorem \ref{thm:Hyp_two}}

\begin{proof}[Proof of Theorem \ref{thm:Hyp_two}]
 Suppose $P_{1}$ and $P_{2}$ are two regular hyperbolic $n$-gons, whose areas add up to $\mathrm{area}(P)$. By Gauss-Bonnet theorem, if $\theta_{1},\theta_{2}$ and $\theta$ are the interior angles of $P_{1},P_{2}$ and $P$ respectively, then we have $$ \theta_{1}+\theta_{2}=\theta+\frac{(n-2)\pi}{n}.$$
For a fixed $\theta\in\left(0,\frac{(n-2)\pi}{n}\right)$, consider $c=\theta+\frac{(n-2)\pi}{n}$. Then the expression $\mathrm{perim}\left(P_{1}\right)+\mathrm{perim}\left(P_{2}\right)$ can be written as $$ 2n\left(g_{n}\left(\theta_{1}\right)+g_{n}\left(c-\theta_{1}\right)\right)=2nh_{n}\left(\theta_{1}\right),$$
where $g_n$ is defined in equation~\eqref{eq:g} and $h_{n}$ is defined as in Lemma \ref{lem:local_min}. Thus, it suffices to prove that the inequality 
\begin{equation}\label{ineq:hyp_two}
h_{n}\left(\theta_{1}\right)\geq g_{n}\left(\theta\right)
\end{equation}
holds for every $\theta_{1}\in\left(c-\frac{(n-2)\pi}{n},\frac{(n-2)\pi}{n}\right)$, for a given value of $\theta$. 

Firstly, note that in the limiting cases, when $\theta_{1}$ approaches either ends of its domain, then $h_{n}\left(\theta_{1}\right)$ tends to $g_{n}\left(\theta\right)$ and we have equality. In the domain $\left(c-\frac{(n-2)\pi}{n},\frac{(n-2)\pi}{n}\right)$, Lemma~\ref{lem:local_min} shows that the only local minimum for $h_{n}$ occurs at $\theta_{1}=\frac{c}{2}$. Thus, inequality \eqref{ineq:hyp_two} is satisfied for every $\theta_{1}\in\left(c-\frac{(n-2)\pi}{n},\frac{(n-2)\pi}{n}\right)$ if and only if $$ h_{n} \left(\frac{c}{2}\right) = 2g_{n}\left(\frac{\theta}{2}+\frac{\pi}{2}-\frac{\pi}{n}\right)\geq g_{n}\left(\theta\right).$$
Let $\Theta$ denote the unique root to the function $$\phi_{n}(x)=2g_{n}\left(\frac{x}{2}+\frac{\pi}{2}-\frac{\pi}{n}\right)-g_{n}\left(x\right),$$ in $\left(0,\frac{(n-2)\pi}{n}\right)$, as demonstrated in Lemma~\ref{lem:one_root}. Note that $\Theta$ is determined solely by $n$. Then (using the computations in Lemma~\ref{lem:one_root}), $\phi_{n}(x)\geq0$ if and only if $x\in\left(\Theta,\frac{(n-2)\pi}{n}\right)$, with the inequality being strict when $x\in\left(\Theta,\frac{(n-2)\pi}{n}\right)$. This is precisely what we wanted to prove. 
\end{proof}

As a direct corollary, we now prove Theorem~\ref{thm:Hyp}.

\begin{proof}[Proof of Theorem \ref{thm:Hyp}]
Suppose for some $k\geq3$, we have a configuration of $k$ regular hyperbolic $n$-gons $P_{1},\dots,P_{k}$, whose total area equals $\mathrm{area}(P)$. If $\theta<\Theta$, then Theorem \ref{thm:Hyp_two} automatically gives us a configuration of two congruent regular $n$-gons $Q_{1}$ and $Q_{2}$, whose total area equals $\mathrm{area}(P)$ and whose total perimeter is less than the perimeter of $P$. 

Suppose $\theta\geq\Theta$. This is equivalent to the condition that $$ \mathrm{area}(P)\leq\left(n-2\right)\pi-n\Theta,$$
by Gauss-Bonnet theorem. Construct regular hyperbolic $n$-gons $P_{1}',\dots,P_{k}'$, such that $$ \mathrm{area}\left(P_{i}'\right)=\sum_{j=1}^{i}\mathrm{area}\left(P_{j}\right),$$
for each $i=1,\dots,k$. We get that 
\begin{align*}
\mathrm{area}\left(P_{1}'\right)<\mathrm{area}\left(P_{2}'\right)<\dots & <\mathrm{area}\left(P_{k}'\right)=\mathrm{area}(P)\\
 & \leq\left(n-2\right)\pi-n\Theta.
\end{align*}
Thus, we can repeatedly apply Theorem \ref{thm:Hyp_two} to obtain the sequence of inequalities 
\begin{align*}
\mathrm{perim}\left(P_{1}\right)+\mathrm{perim}\left(P_{2}\right) & \geq\mathrm{perim}\left(P_{2}'\right)\\
\mathrm{perim}\left(P_{2}'\right)+\mathrm{perim}\left(P_{3}\right) & \geq\mathrm{perim}\left(P_{3}'\right)\\
\vdots\\
\mathrm{perim}\left(P_{k-1}'\right)+\mathrm{perim}\left(P_{k}\right) & \geq\mathrm{perim}\left(P\right).
\end{align*}
Summing these inequalities together gives us the desired result. Note that when $\theta>\Theta$, each of these inequalities is strict, and so would their sum be.
\end{proof}

\section{Further thoughts}\label{sec:Further}

Here are some possible applications / lines of thought that follow from this work.
\begin{enumerate}
\item The proof of Theorem \ref{thm:Euc} in the Euclidean case, suggests a link between the Pythagorean theorem and the isoperimetric inequality for disconnected regions. The cosine law in spherical and hyperbolic geometry helps relate the length of a side of a triangle with the length of the other two sides. This motivates a possible way to prove Theorems \ref{thm:Sph} and \ref{thm:Hyp}, by associating the perimeters of two regular polygons in a configuration and the perimeter of the regular polygon with the same total area to three sides of a triangle.
\item Proposition \ref{prop:weak} shows that when $\mathbb{M}=\mathbb{R}^{2}$ or $\mathbb{S}^{2}$, the only configuration of polygons that minimizes the total perimeter (for a given total area) is one consisting of a single regular polygon. Theorem \ref{thm:Hyp} shows that when $\mathbb{M}=\mathbb{H}^{2}$, the configuration consisting of a single regular polygon isn't the unique configuration minimizing the total perimeter when $\theta=\Theta$, and that this does not minimize total perimeter when $\theta<\Theta$. One may thus work on finding exactly the total perimeter minimizing configurations in $\mathbb{H}^{2}$, for $\theta\leq\Theta$. 
\item One may try to extend Proposition \ref{prop:weak} and Theorem \ref{thm:Hyp} to configurations of polygons with a general number of sides. Notably in this direction, for a fixed area, the perimeter of a regular $n$-gon is monotonically decreasing with $n$ (for $\mathbb{M}=\mathbb{R}^{2},\mathbb{S}^{2}$ or $\mathbb{H}^{2}$). Thus, the corresponding bounds are automatically satisfied for polygons with ``at most $n$ sides" instead of ``polygons with $n$ sides". However, considering configurations of polygons with number of sides satisfying some relation could lead to tighter bounds.
\item As in \cite{Csi}, one may try to extend the contents of Proposition \ref{prop:weak} and Theorem \ref{thm:Hyp} to polygons bounded by lines of constant geodesic curvature.
\end{enumerate}

{}

\end{document}